\newtheorem{theorem}{Theorem}[section]
\newtheorem{proposition}[theorem]{Proposition}
\newtheorem{corollary}[theorem]{Corollary}
\theoremstyle{definition}
\newtheorem{definition}[theorem]{Definition}
\newtheorem{example}[theorem]{Example}
\theoremstyle{remark}
\numberwithin{equation}{section}
\begin{document}

\title{Quasi-idempotent Rota-Baxter operators arising from quasi-idempotent elements}

\author{Run-Qiang Jian}
\address{Department of Mathematics, Dongguan University
of Technology, 1, Daxue Road, Songshan Lake, 523808, Dongguan, P.
R. China}
\email{jianrq@dgut.edu.cn}
\thanks{}

\subjclass[2010]{Primary 16W99; Secondary 16A24}

\date{}

\dedicatory{}

\keywords{Rota-Baxter algebra, Hopf algebra, quasi-idempotent
element}

\begin{abstract}
In this short note, we construct quasi-idempotent Rota-Baxter
operators by quasi-idempotent elements and show that every finite
dimensional Hopf algebra admits nontrivial Rota-Baxter algebra
structures and tridendriform algebra structures. Several concrete
examples are provided, including finite quantum groups and
Iwahori-Hecke algebras.
\end{abstract}

\maketitle

\section{Introduction}

A Rota-Baxter algebra of weight $\lambda\in \mathbb{C}$ is an
associative algebra $R$ equipped with a linear endomorphism $P$,
called a Rota-Baxter operator, verifying
$$P(a)P(b)=P(aP(b))+P(P(a)b)+\lambda P(ab),\forall a,b\in
R.$$Sometimes we denote a Rota-Baxter algebra by the pair $(R,P)$
or the triple $(R,\cdot, P)$ if we need to emphasize the
multiplication $\cdot$. The notion of Rota-Baxter algebra was
first explicitly introduced by G.-C. Rota \cite{R} based on the
work of G. Baxter \cite{B}. From then on, mathematicians began to
study various aspects of these algebras and their applications
(cf. \cite{C}, \cite{G1}, \cite{GK}, \cite{EGP}). Besides their
own interest in mathematics, Rota-Baxter algebras have also many
important applications in mathematical physics. For example, they
play an essential role in the Connes-Kreimer theory on the Hopf
algebra approach to renormalization in perturbative quantum field
theory (cf. \cite{CK1}, \cite{CK2}, \cite{EGK1}, \cite{EGK2},
\cite{EMP}). They also appear in the study of Loday type algebras
(\cite{E}, \cite{EG}), pre-Lie algebras (\cite{AB}), and
pre-Poisson algebras (\cite{A}).

Undoubtedly, good examples make a theory more dynamic. They
provide intuitions and ideas for the further study of the subject.
In the theory of Rota-Baxter algebras, there are many examples in
some sense. Indeed, every algebra possesses at least a Rota-Baxter
operator since the identity map is actually such an operator.
However this is trivial and useless. For all these reasons, we
would like to find systematic constructions of nontrivial
Rota-Baxter algebras. Moreover, we would like to search
interesting examples related to other branches of mathematics and
mathematical physics, and then we can apply the tools from
Rota-Baxter algebras to the study of these subjects. The recent
article \cite{J} is such an attempt. There, we use algebras in the
category of Hopf modules to construct idempotent Rota-Baxter
algebras. As an application, we obtain Rota-Baxter algebra
structures on the positive part of a quantum group. On the other
hand, the size of a Rota-Baxter algebra is usually very large. In
other words, most of the known examples are infinite dimensional
(cf. \cite{G2}). It is difficult to compute. The main purpose of
this note is to give an explicit construction of finite
dimensional Rota-Baxter algebras. Our starting point is the
following easy observation. Let $A$ be an associative algebra and
$a$ be an idempotent element of $A$, i.e., $a^2=a$. We denote by
$l_a$ the operator of left multiplication by $a$. Then for any
$b,c\in A$, we have
\begin{eqnarray*}
l_a(l_a(b)c)+l_a(bl_a(c))-l_a(bc)&=&a^2bc+abac-abc\\[3pt]
&=&abc+abac-abc\\[3pt]
&=&abac\\[3pt]
&=&l_a(b)l_a(c).
\end{eqnarray*}
Therefore $l_a$ is an idempotent Rota-Baxter operator of weight
$-1$. The problem is that idempotent elements do not always exist
except the neutral element. In order to overcome this difficulty,
we consider the so called quasi-idempotent elements. Recall that
an element $a\in A$ is said to be quasi-idempotent if it satisfies
$a^2=k_a a$ for some $k_a\in \mathbb{C}$. This slight modification
enables one to construct plenty of nontrivial examples having
small size since such elements do exist for finite dimensional
Hopf algebras. Note that finite dimensional Hopf algebras contain
lots of significant examples, especially, Lusztig's small quantum
groups (\cite{L1}, \cite{L2}). This construction also contains
other important examples such as Iwahori-Hecke algebras. So we can
apply our construction to these examples and get interesting
Rota-Baxter algebras.

This note is organized as follows. In Section 2, we recall the
notion of quasi-idempotent element in an algebra and use it to
construct quasi-idempotent Rota-Baxter operators. From this
construction, we show that every finite dimensional Hopf algebra
admits nontrivial Rota-Baxter algebra structures and tridendriform
algebra structures. In Section 3, based on the constructions given
in Section 2, we provide several interesting examples, including
finite quantum groups and Iwahori-Hecke algebras.

\section{Constructions}
For simplicity, we fix our ground field to be the complex number
field $\mathbb{C}$ throughout this note. All the objects we
discuss are defined over $\mathbb{C}$ unless otherwise specified.

We first recall the notion of quasi-idempotent operator which is
introduced in \cite{AM}.

\begin{definition}Let $A$ be an associative algebra and $\lambda\in \mathbb{C}$. A linear endomorphism $\phi$ of $A$ is called a \emph{quasi-idempotent operator of weight $\lambda$} if $\phi^2=-\lambda \phi$. A nonzero element $\xi\in A$ is called a \emph{quasi-idempotent element of weight $\lambda$} if $\xi^2=-\lambda\xi$.\end{definition}

Now we use quasi-idempotent elements to construct quasi-idempotent
Rota-Baxter operators.

\begin{proposition}For a fixed quasi-idempotent element $\xi\in A$ of weight $\lambda$, we define $P_\xi:A\rightarrow A$ by $P_\xi(a)=\xi a$ for any $a\in A$. Then $P_\xi$ is a quasi-idempotent Rota-Baxter operator of weight $\lambda$ on $A$.\end{proposition}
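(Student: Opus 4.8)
The plan is to verify the two defining properties directly: that $P_\xi$ is a Rota-Baxter operator of weight $\lambda$, and that it is quasi-idempotent of weight $\lambda$ in the sense of the definition, i.e. $P_\xi^2 = -\lambda P_\xi$.

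For the quasi-idempotence, I would simply compute $P_\xi^2(a) = P_\xi(\xi a) = \xi(\xi a) = \xi^2 a = (-\lambda\xi)a = -\lambda(\xi a) = -\lambda P_\xi(a)$, using the defining relation $\xi^2 = -\lambda\xi$ of the quasi-idempotent element together with associativity. This establishes $P_\xi^2 = -\lambda P_\xi$ immediately.

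The main computation is the Rota-Baxter identity. For arbitrary $b,c \in A$, I would expand each of the three terms on the right-hand side using $P_\xi(x) = \xi x$:
\begin{align*}
P_\xi(b P_\xi(c)) + P_\xi(P_\xi(b) c) + \lambda P_\xi(bc)
&= \xi b \xi c + \xi \xi b c + \lambda \xi b c \\
&= \xi b \xi c + \xi^2 b c + \lambda \xi b c.
\end{align*}
Now substituting $\xi^2 = -\lambda\xi$ into the middle term gives $\xi^2 bc + \lambda\xi bc = -\lambda\xi bc + \lambda\xi bc = 0$, so the right-hand side collapses to $\xi b \xi c$. On the other hand, the left-hand side of the Rota-Baxter identity is $P_\xi(b)P_\xi(c) = (\xi b)(\xi c) = \xi b \xi c$. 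The two sides agree, which is exactly the desired relation.

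I do not anticipate any serious obstacle here, since the argument is a direct symbolic manipulation relying only on associativity and the single relation $\xi^2 = -\lambda\xi$; the key observation that makes it work cleanly is that the weight-$\lambda$ term and the doubled-operator term cancel precisely because of the quasi-idempotency relation, leaving only the cross term $\xi b \xi c$ which matches $P_\xi(b)P_\xi(c)$. The one point worth stating explicitly is that this mirrors the idempotent case ($\lambda = -1$) sketched in the introduction, with $k_\xi = -\lambda$ playing the role of the idempotent scalar, so no genuinely new idea is required beyond tracking the scalar carefully.
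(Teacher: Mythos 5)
Your proposal is correct and follows essentially the same direct computation as the paper: expand the three terms via $P_\xi(x)=\xi x$, use $\xi^2=-\lambda\xi$ to cancel the weight term against the doubled-operator term, and match the remaining cross term $\xi a\xi b$ with $P_\xi(a)P_\xi(b)$. You additionally verify the quasi-idempotence $P_\xi^2=-\lambda P_\xi$ explicitly, which the paper leaves implicit; that is a harmless (indeed slightly more complete) addition, not a different approach.
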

\begin{proof}For any $a,b\in A$, we have \begin{eqnarray*}
P_\xi\big(P_\xi(a)b\big)+P_\xi\big(aP_\xi(b)\big)+\lambda P_\xi(ab)&=&\xi^2ab+\xi a\xi b+\lambda\xi ab\\[3pt]
&=&-\lambda\xi ab+\xi a\xi b+\lambda\xi ab\\[3pt]
&=&\xi a\xi b\\[3pt]
&=&P_\xi(a)P_\xi(b).
\end{eqnarray*}\end{proof}

Here the meaning of the weight of $P_\xi$ is twofold: the weight
of quasi-idempotent operators and the weight of Rota-Baxter
operators.\\

Rota-Baxter algebras are closely related to tridendriform
algebras.

\begin{definition}[\cite{LR}]Let $V$ be a vector space, and $\prec$, $\succ$ and $\cdot$ be three binary operations on $V$. The quadruple $(V,\prec, \succ, \cdot)$ is called a \emph{tridendriform algebra} if the following relations are
satisfied: for any $x,y,z\in V$,
\[\begin{split}(x\prec y)\prec z&=x\prec(y\ast z),\\[3pt]
(x\succ y)\prec z&=x\succ (y\prec z),\\[3pt]
(x\ast y)\succ z&=x\succ(y\succ z),\\[3pt]
(x\succ y)\cdot z&=x\succ(y \cdot z),\\[3pt]
(x\prec y)\cdot z&=x\cdot (y\succ z),\\[3pt]
(x\cdot y)\prec z&=x\cdot (y\prec z),\\[3pt]
(x\cdot y)\cdot z&=x\cdot (y \cdot z),\end{split}\]where $x\ast
y=x\prec y+x\succ y+ x\cdot y$.\end{definition}

Given a Rota-Baxter algebra $(R,\cdot, P)$ of weight $1$,  we
define $a\prec b=a\cdot P(b)$ and $a\succ b=P(a)\cdot b$ for
$a,b\in R$. Then $(R,\prec,\succ,\cdot)$ is a tridendriform
algebra (see e.g. \cite{E}).

\begin{corollary}Given a quasi-idempotent element $\xi$ of weight $\lambda\neq 0$ in an algebra $A$, the operations $\prec$, $\succ$, and $\cdot$ defined below endow a tridendriform algebra structure on $A$: for any $a,b\in A$,\[\begin{split} a\prec b&=\lambda^{-1}a\xi b,\\[3pt]
a\succ b&=\lambda^{-1}\xi a b,\\[3pt]
a\cdot b&=ab.\end{split}\]\end{corollary}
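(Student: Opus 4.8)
The plan is to derive the corollary directly from the preceding results, since the hard work has already been set up. The key observation is that the statement is exactly the specialization of the Rota-Baxter$\to$tridendriform construction (recalled just above the corollary) to the particular operator $P_\xi$ built in the proposition.

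First I would note that the proposition gives a Rota-Baxter operator $P_\xi$ of weight $\lambda$, but the tridendriform construction recalled in the text requires weight exactly $1$. To bridge this gap I would rescale: set $Q = \lambda^{-1} P_\xi$, so that $Q(a) = \lambda^{-1}\xi a$. The task becomes to check that $Q$ is a Rota-Baxter operator of weight $1$ on $A$. This is a one-line verification, either by substituting directly into the weight-$1$ Rota-Baxter identity, or by observing more structurally that $\lambda^{-1}\xi$ is a quasi-idempotent element of weight $1$ (since $(\lambda^{-1}\xi)^2 = \lambda^{-2}\xi^2 = \lambda^{-2}(-\lambda\xi) = -\lambda^{-1}\xi$) and then invoking the proposition with this rescaled element.

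Having a weight-$1$ Rota-Baxter operator $Q$, I would then simply apply the quoted fact that $a\prec b = a\cdot Q(b)$ and $a\succ b = Q(a)\cdot b$ define a tridendriform algebra. Unwinding these definitions gives $a\prec b = a\,Q(b) = a(\lambda^{-1}\xi b) = \lambda^{-1}a\xi b$ and $a\succ b = Q(a)\,b = (\lambda^{-1}\xi a)b = \lambda^{-1}\xi a b$, with $a\cdot b = ab$ unchanged, which are precisely the three operations in the statement. This matches the claimed formulas exactly, so no further computation is needed beyond confirming the substitutions.

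The only genuinely delicate point is the sign and weight bookkeeping: the proposition produces weight $\lambda$, the tridendriform recipe demands weight $1$, and one must be careful that the rescaling factor is $\lambda^{-1}$ (not, say, $-\lambda^{-1}$) and that it is applied consistently to both $\prec$ and $\succ$ while leaving $\cdot$ alone. I expect this to be the main obstacle, though it is minor. An alternative, fully self-contained route would be to bypass the quoted construction entirely and verify the seven tridendriform axioms by hand, using only $\xi^2 = -\lambda\xi$ and associativity of $A$; each axiom then reduces to an identity among words in $a,b,c,\xi$ after clearing the $\lambda^{-1}$ factors, but this is more laborious and I would prefer the structural argument above.
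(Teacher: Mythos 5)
Your proposal is correct and is exactly the argument the paper intends: the corollary is stated immediately after the recalled weight-$1$ Rota-Baxter $\to$ tridendriform construction, and the $\lambda^{-1}$ factors in the formulas encode precisely your rescaling $Q=\lambda^{-1}P_\xi$ (equivalently, replacing $\xi$ by the weight-$1$ quasi-idempotent $\lambda^{-1}\xi$ in the proposition). Your weight bookkeeping checks out, so nothing further is needed.
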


If $\lambda=0$, we can obtain a similar construction of dendriform algebra structures on $A$.\\

Notice that quasi-idempotent elements of weight $-1$ are just idempotent elements. So they are a generalization of idempotent elements. But as we mentioned in the introduction, quasi-idempotent elements always exist in finite dimensional Hopf algebras while idempotent elements do not. In order to provide quasi-idempotent elements, we first recall some standard notions and facts from Hopf algebras.\\

We always denote by $(H,\Delta,\varepsilon,S)$ a finite
dimensional Hopf algebra with coproduct $\Delta$, counit
$\varepsilon$, and antipode $S$ in the sequel. We adopt Sweedler's
notion for coalgebra: $\Delta(h)=\sum h_{(1)}\otimes h_{(2)}$ for
any $h\in H$. We denote by $H^\ast$ the linear dual of $H$, i.e.,
$H^\ast=\mathrm{Hom}_\mathbb{C}(H, \mathbb{C})$, and by $<,>$ the
usual contraction between $H^\ast$ and $H$. So
$<a^\ast,a>=a^\ast(a)$ for any $a^\ast \in H^\ast $ and $a\in H$.
Then $H^\ast$ is also a Hopf algebra with multiplication
determined by $<a^\ast b^\ast, a>=\sum <a^\ast, a_{(1)}><b^\ast,
a_{(2)}>$. Let $l_{a^\ast}$ be the endomorphism of $H^\ast$
defined by the left multiplication by $a^\ast$. Then there is a
unique element $x_H$ such that $$<a^\ast,
x_H>=\mathrm{Tr}(l_{a^\ast}),\ \forall a^\ast \in H^\ast,$$ where
$\mathrm{Tr}$ is the usual trace of endomorphisms.

The element $x_H$ has the following properties (for a proof, one
can see Proposition 10.7.6 in \cite{Ra}).

\begin{proposition}Under the notation above, we have $\varepsilon(x_H)=\dim H$ and $x_H^2=\varepsilon(x_H)x_H$.\end{proposition}

So $x_H$ is a quasi-idempotent element of weight $-\dim H$. Furthermore, we have other quasi-idempotent elements in $H$. An element $\Lambda\in H$ is called a \emph{left (resp. right) integral} for $H$ if $a\Lambda=\varepsilon(a)\Lambda$ (resp. $\Lambda a=\varepsilon(a)\Lambda$) for all $a\in H$. Obviously, nonzero integrals are quasi-idempotent elements. It is well-known that the spaces of left integrals and right integrals are one-dimensional (see e.g. Theorem 10.2.2 in \cite{Ra}). These two spaces do not coincide in general (see Example \ref{example1} below). And normally, the element $x_H$ is not an integral since $cx_H=\varepsilon(c)x_H$ only for cocommutative element $c$ (that means $\sum c_{(1)}\otimes c_{(2)}=\sum c_{(2)}\otimes c_{(1)}$).\\

By combining the discussions above, we see that $P_{x_H}$ and $P_\Lambda$ are Rota-Baxter operators on $H$. As a consequence, we have

\begin{theorem}Every finite dimensional Hopf algebra admits nontrivial Rota-Baxter algebra structures and tridendriform algebra structures.\end{theorem}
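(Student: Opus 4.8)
The plan is to exhibit, on an arbitrary finite dimensional Hopf algebra $H$, a distinguished quasi-idempotent element and then feed it into the machinery already established. The natural choice is the element $x_H$ of Proposition 2.5, which satisfies $x_H^2=\varepsilon(x_H)x_H$ with $\varepsilon(x_H)=\dim H$, so that $x_H$ is a quasi-idempotent element of weight $-\dim H$. I prefer $x_H$ over a nonzero integral $\Lambda$ because the weight $-\dim H$ is automatically nonzero, whereas the weight $-\varepsilon(\Lambda)$ attached to an integral can vanish (precisely when $H$ fails to be semisimple), and Corollary 2.4 requires nonzero weight for the tridendriform conclusion.

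First I would invoke Proposition 2.2 with $\xi=x_H$ to conclude that $P_{x_H}(a)=x_H a$ is a Rota-Baxter operator of weight $-\dim H$ on $H$. For the tridendriform statement, I would observe that $\lambda=-\dim H\neq 0$ and apply Corollary 2.4 directly, equipping $H$ with a tridendriform algebra structure through $a\prec b=\lambda^{-1}a x_H b$, $a\succ b=\lambda^{-1}x_H a b$, and $a\cdot b=ab$. Both assertions of the theorem thus issue from the single element $x_H$, so the existence parts are immediate from the preceding results.

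The substantive point, and the step I expect to require the most care, is verifying nontriviality, since every algebra already carries the identity map as a trivial Rota-Baxter operator. Here I would show that $P_{x_H}$ is neither the zero map nor the identity. It is nonzero because $x_H\neq 0$, which follows from $\varepsilon(x_H)=\dim H\neq 0$. It is not the identity because $P_{x_H}=\mathrm{id}$ would give $1=P_{x_H}(1)=x_H\cdot 1=x_H$, and then $x_H^2=\dim H\cdot x_H$ would collapse to $\dim H=1$. Consequently, whenever $\dim H\geq 2$ the operator $P_{x_H}$ is genuinely nontrivial and the induced tridendriform structure is correspondingly nontrivial; the degenerate case $H\cong\mathbb{C}$ carries no interesting structure and may be set aside.
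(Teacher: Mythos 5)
Your proposal is correct and follows essentially the same route as the paper, which likewise obtains the theorem by feeding the quasi-idempotent element $x_H$ (or an integral $\Lambda$) into Proposition 2.2 and Corollary 2.4. Your explicit nontriviality check ($P_{x_H}\neq 0$ since $\varepsilon(x_H)=\dim H\neq 0$, and $P_{x_H}=\mathrm{id}$ forces $\dim H=1$) and your observation that $x_H$ is preferable to $\Lambda$ because its weight $-\dim H$ is automatically nonzero are welcome refinements the paper leaves implicit.
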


\section{Examples}
In this section, we exhibit our construction by four concrete
examples. The first three are from Hopf algebras and quantum
groups, and the last one is from Iwahori-Hecke algebras.

\begin{example}[Group algebras]Let $G$ be a finite group and $H=\mathbb{C}[G]$ its group algebra. Then $H$ is a Hopf algebra with coproduct, counit, and antipode defined respectively by $$\Delta(g)=g\otimes g, \ \varepsilon(g)=1,\ S(g)=g^{-1},\ \forall g\in G.$$Both of the spaces of left integrals and right integrals of $H$ are $\mathbb{C}\xi$ where $\xi=\sum_{g\in G}g$. Then the operator $P_\xi$ on $H$ defined by $P_\xi(h)=\varepsilon(h)\xi$ for $h\in H$ is a Rota-Baxter operator of weight $-|G|$.\end{example}

\begin{example}[Sweedler's four-dimensional Hopf algebra]\label{example1}Let $H$ be the algebra generated by two elements $x$ and $y$ subject to $$x^2=\mathbf{1}, \ y^2=0, \ yx=-xy.$$ Then $H$ is a four-dimensional algebra with a linear basis $\{\mathbf{1},x, y, xy\}$ (see e.g. \cite{K}). Moreover it is a Hopf algebra equipped with the following operations:\[\begin{split}
\Delta(x)=x\otimes x&,\ \Delta(y)=\mathbf{1}\otimes y+y\otimes x,\\[3pt]
\varepsilon(x)&=1,\ \varepsilon(y)=0,\\[3pt]
S(x)&=x,\ S(y)=xy.\end{split}\]

We first compute the element $x_H$. Denote by $\{f_1,f_2,f_3,f_4\}$ the dual basis of $\{\mathbf{1},x, y, xy\}$. The multiplication table of $H^\ast$ is:\begin{center}\begin{tabular}{|l|l|l|l|l|} \hline $\cdot$& $f_1$ &$f_2$
&$f_3$ & $f_4$\\\hline $f_1$&$f_1$&0&$f_3$ &0\\\hline
$f_2$&0&$f_2$&0&$f_4$\\\hline
$f_3$&0&$f_3$&0&0\\\hline
$f_4$&$f_4$&0&0&0\\\hline
\end{tabular}\end{center}Then we have $$\mathrm{Tr}(l_{f_1})=\mathrm{Tr}(l_{f_2})=2,\ \mathrm{Tr}(l_{f_3})=\mathrm{Tr}(l_{f_4})=0,$$and hence $x_H=2(\mathbf{1}+x)$. By the technique of comparison of coefficients, one can show that the spaces of left integrals and right integrals are $\mathbb{C}(y+xy)$ and $\mathbb{C}(y-xy)$ respectively. In general, quasi-idempotent elements of $H$ are of the form $\xi=\mu_1(\mathbf{1}+x)+\mu_2y+\mu_3xy$ for some $\mu_1,\mu_2,\mu_3\in\mathbb{C}$. Therefore, the operator $P_\xi$ defined by the following actions is a Rota-Baxter operator of weight $-(2\mu_1+\mu_2+\mu_3)$: \[\begin{split}
P_\xi(\mathbf{1})&=\mu_1(\mathbf{1}+x)+\mu_2y+\mu_3xy,\\[3pt]
P_\xi(x)&=\mu_1(\mathbf{1}+x)-\mu_3y-\mu_2xy,\\[3pt]
P_\xi(y)&=\mu_1(y+xy),\\[3pt]
P_\xi(xy)&=\mu_1(y+xy).\end{split}\]\end{example}

\begin{example}[Lusztig's small quantum groups]Let $\mathfrak{g}$ be a complex simple Lie algebra. In \cite{L1} and \cite{L2}, Lusztig constructed some finite dimensional Hopf algebras attached to $\mathfrak{g}$ which are called small quantum groups or Frobenius-Lusztig kernels nowadays. Numerous mathematicians have investigated these interesting Hopf algebras. Here we endow Rota-Baxter algebra structures on these small quantum groups. For simplifying statements and computations, we only give the explicit formulas for $\mathfrak{g}=\mathfrak{sl}(2)$. We follow the description of the small quantum group attached to $\mathfrak{sl}(2)$ given in \cite{K}. Let $d$ be a positive integer $>2$ and $q$ be a $d$-th primitive root of 1. Denote \[e=\left\{
\begin{array}{lll}
d,&&\text{if }d\text{ is odd},\\[3pt]
d/2,&&\text{if }d\text{ is even}.
\end{array} \right.
\]We define $\overline{U}_q$ to be the algebra generated by $E,F,K,K^{-1}$ subject to \[\begin{split}
&KK^{-1}=K^{-1}K=1,\\[3pt]
KE&=q^2EK,\ KF=q^{-2}FK,\\[3pt]
&EF-FE=\frac{K-K^{-1}}{q-q^{-1}},\\[3pt]
&E^e=F^e=0,\ K^e=1.\end{split}\]Furthermore, $\overline{U}_q$ is a Hopf algebra with coproduct, counit, and antipode defined below:\[\begin{split}
\Delta(E)&=1\otimes E+E\otimes K,\ \Delta(F)=K^{-1}\otimes F+F\otimes 1,\\[3pt]
&\Delta(K)=K\otimes K,\ \Delta(K^{-1})=K^{-1}\otimes K^{-1},\\[3pt]
&\varepsilon(E)=\varepsilon(F)=0,\ \varepsilon(K)=\varepsilon(K^{-1})=1,\\[3pt]
S(E)=-&EK^{-1},\ S(F)=-KF,\ S(K)=K^{-1},
S(K^{-1})=K.\end{split}\]As a vector space, $\overline{U}_q$ is
finite dimensional with a basis $\{E^iF^jK^l\}_{0\leq i,j,l\leq
e-1}$. One can verify directly that
$$\xi=E^{e-1}F^{e-1}(1+K+K^2+\cdots+K^{e-1})$$is a left integral
of $\overline{U}_q$. Then the operator $P_{\xi}$ on
$\overline{U}_q$ of left multiplication by $\xi$ is a Rota-Baxter
operator of weight 0. We illustrate $P_\xi$ by its action on the
generators: \[\begin{split}
P_\xi(E)=\sum_{n=0}^{e-1}\frac{q^{2n+e-2}}{q-q^{-1}}[e-1]E^{e-1}F^{e-2}K^{n+1}&-\sum_{n=0}^{e-1}\frac{q^{2n+2-e}}{q-q^{-1}}[e-1]E^{e-1}F^{e-2}K^{n-1},\\[3pt]
P_\xi(F)&=0,\\[3pt]
P_\xi(K)&=\xi,\end{split}\]where $[e-1]=\frac{q^{e-1}-q^{1-e}}{q-q^{-1}}$.\end{example}

\begin{example}[Iwahori-Hecke algebras]Let $q$ be an indeterminate and $\mathcal{A}$ be the ring $\mathbb{Z}[q^{\frac{1}{2}},q^{-\frac{1}{2}}]$ of Laurent polynomials in $q^{\frac{1}{2}}$. Let $(W,S)$ be a Coxeter system and $\mathcal{H}$ be the corresponding Iwahori-Hecke algebras over $\mathcal{A}$. Then as an $\mathcal{A}$-module $\mathcal{H}$ has a standard basis $\{T_w|w\in W\}$, and the multiplication relations for this basis are: \[\begin{split}
T_wT_{w'}&=T_{ww'},\text{ if }l(ww')=l(w)+l(w'),\\[3pt]
T_s^2&=(q-1)T_s+q, \text{ for } s\in S,\end{split}\]where $l$ is
the usual length function on $W$. For any $s\in S$ we denote
$C_s=q^{-\frac{1}{2}}(T_s-q)$. They are parts of the
Kazhdan-Lusztig basis of $\mathcal{H}$ (see \cite{KL} and
\cite{H}). Then we have\begin{eqnarray*}
C_s^2&=&\big(q^{-\frac{1}{2}}(T_s-q)\big)^2\\[3pt]
&=&q^{-1}(T_s^2-2qT_s+q^2)\\[3pt]
&=&q^{-1}((q-1)T_s+q-2qT_s+q^2)\\[3pt]
&=&-(q^{\frac{1}{2}}+q^{-\frac{1}{2}})q^{-\frac{1}{2}}(T_s-q)\\[3pt]
&=&-(q^{\frac{1}{2}}+q^{-\frac{1}{2}})C_s.
\end{eqnarray*}So $C_s$ is an quasi-idempotent element of weight $q^{\frac{1}{2}}+q^{-\frac{1}{2}}$, and hence the operator $P_{C_s}$ given below is a Rota-Baxter operator of weight $q^{\frac{1}{2}}+q^{-\frac{1}{2}}$ on $\mathcal{H}$:
\[P_{C_s}(T_w)=\left\{
\begin{array}{ll}
q^{-\frac{1}{2}}T_{sw}-q^{\frac{1}{2}}T_w,&\text{ if }l(sw)>l(w),\\[3pt]
q^{\frac{1}{2}}T_{sw}-q^{-\frac{1}{2}}T_w,&\text{ if }l(sw)<l(w).
\end{array} \right.
\]\end{example}

\section*{Acknowledgements}
This work was started during my visit to Chern Institute of
Mathematics in February 2016. I would like to thank its support
and hospitality. I am indebted to Prof. Chengming Bai for his kind
invitation and useful discussions which push me to consider the
construction in this note. I also would like to thank Prof. Li Guo
for sharing his passions on Rota-Baxter algebras and valuable
discussions on related topics. Finally I would like to thank the
referee for careful reading and useful comments, especially, for
pointing out the reference \cite{AM} to me. This work was
partially supported by National Natural Science Foundation of
China (Grant No. 11201067).

\bibliographystyle{amsplain}

\end{document}